\newtheorem{theorem}{Theorem}
\newtheorem{lemma}[theorem]{Lemma}
\theoremstyle{definition}
\newtheorem{definition}[theorem]{Definition}
\newtheorem{proposition}[theorem]{Proposition}
\newtheoremstyle{remark}{9pt}{9pt}{}{0pt}{\bf}{.}{0.5em}{}
\theoremstyle{remark} \newtheorem{remark}[theorem]{Remark}
\newcommand{\MM}{{\mathcal M}}
\newcommand{\OO}{{\mathcal O}}
\newcommand{\TT}{{\mathcal T}}
\newcommand{\CC}{{\mathbb C}}
\newcommand{\DD}{{\mathbb D}}
\newcommand{\ZZ}{{\mathbb Z}}
\newcommand{\GG}{{\mathbb G}}
\def\tr{\operatorname{tr}}
\def\aut{\operatorname{Aut}}
\def\tr{\operatorname{tr}}
\def\div{\operatorname{div}}
\def\Ker{\operatorname{Ker}}
\title{A local form of the automorphisms of the spectral ball}
\author{\L ukasz Kosi\'nski}
\address{ Wydzia\l Matematyki i informatyki, Uniwersytet Jagiello\'nski\\ ul. prof. St. \L ojasiewicza~6\\ 30-348 Krak\'ow,}
\email{lukasz.kosinski@gazeta.pl}
\thanks{}
\keywords{Spectral unit ball, Danielewski surface, automorphisms}
\subjclass[2010]{}
\begin{document}
\maketitle

\begin{abstract} We show that the group generated by by triangular and diagonal conjugations is dense in $\aut(\Omega_2)$ (in the compact-uniform topology). Moreover, it is shown that any automorphism of $\Omega_2$ is a local holomorphic conjugation.
\end{abstract}

\section{Introduction and statement of the result}

Let $\Omega_n$ denote the spectral ball in $\CC^{n^2}$, that is a domain composed of $n\times n$ complex matrices whose spectral radius is less then $1.$

The natural question that arises is to classify its group of automorphisms. It may be easily checked that among them there are the following three forms:
\begin{enumerate}[(i)]
\item {\it Transposition}: $\tau: x\mapsto x^t,$
\item {\it M\"obius maps}: $m_{\alpha,\gamma}: x\mapsto \gamma(x-\alpha)(1-\overline{\alpha}x)^{-1},$ where $\alpha$ lies in the unit disc and $|\gamma|=1,$
\item {\it Conjugations:} $\mathcal J_u:x\mapsto u(x)^{-1}xu(x),$ where $u:\Omega\to \MM_n^{-1}$ is a conjugate invariant holomorphic map, i.e. $u(q^{-1}xq)=u(x)$ for each $x\in \Omega$ and $q\in \mathcal M_n^{-1}$
\end{enumerate} (throughout the paper $\MM^{-1}_n$ denotes the group of invertible $n\times n$ complex matrices).

Ransford and White have asked in \cite{Ran-Whi} whether the compositions of the three above forms generate the whole $\aut (\Omega)$ - the group of automorphisms of the spectral ball. In \cite{Kos} we have shown that the answer to this question is negative. Nevertheless the question about the description of $\aut(\Omega)$ remains open. In this note we deal with this problem.

Let us introduce some notation. $\mathcal M_n$ denotes the algebra of $n\times n$ matrices with complex coefficients. Moreover, $\mathcal T_n$ is the set of non-cyclic matrices lying in $\Omega_n$. Recall that a matrix $M$ is cyclic if there exists a cyclic vector $v\in \mathbb C^n$ for $M$, i.e. $v$ such that $(v,Mv,\ldots , M^{n-1} v)$ spans $\mathbb C^n.$ It is well known that $M$ is cyclic if and only if for any $\lambda\in \mathbb C$ $$\dim \Ker (M-\lambda)\leq 1.$$ In particular, any matrix having $n$ distinc eigenvalues is cyclic.

In the case $n=2$ we shall simply write $\Omega=\Omega_2$, $\MM=\mathcal M_2$ and $\mathcal T=\mathcal T_2.$

\begin{definition}\label{def} Let $U$ be an open subset of the spectral ball $\Omega.$ We shall say that a mapping $\varphi:U\to \mathcal M_n$ is a \emph{holomorphic conjugation} if there is a holomorphic mapping $p:U\mapsto \mathcal M_n^{-1}$ such that $\varphi(x)=p(x)xp(x)^{-1},$ $x\in U.$

Moreover, we shall say that $\varphi$ is a \emph{local holomorphic conjugation} if any $x\in U$ has a neighborhood $V$ such that $\varphi$ restricted to it is a holomorphic conjugation.
\end{definition}

The paper is organized as follows. Recall that we presented in \cite{Kos} two counterexamples to the question on the description of the group of automorphisms of the spectral ball. We focus on them in the sense that we classify all automorphisms of the spectral ball of the form $$\Omega\ni x\mapsto u(x)xu(x)^{-1}\in \Omega,$$ where $u\in \OO(\Omega,\MM^{-1})$ is such that $u(x)$ is either diagonal or triangular, $x\in \Omega$ (we shall call the \emph{diagonal} and \emph{triangular} conjugations respectively).

It is known that any automorphism of the spectral ball fixing the origin preserves the spectrum. Therefore, trying to describe the group $\aut(\Omega)$ it is natural to investigate the behavior of automorphisms restricted to the fibers of $\Omega,$ i.e. sets of the form $\mathcal F_{(\lambda_1,\lambda_2)}:=\{x\in \Omega:\ \sigma(x)=\{\lambda_1,\lambda_2\}\},$ $\lambda_1,\lambda_2\in \DD,$ where $\sigma(x)$ denotes the spectrum of $x\in \MM.$  If $\lambda_1\neq \lambda_2$, the fiber $\mathcal F_{(\lambda_1,\lambda_2)}$ forms a submanifold known as the \emph{Danielewski surfaces}. Recall that the Danielewski surface associated with a polynomial $p\in \CC[z]$ is given by $$D_p:=\{(x,y,z)\in \CC^3:\ xy=p(x)\}$$ and its complex structure is naturally induced from $\CC^3.$ Algebraic properties of Danielewski surfaces have been intensively studied in the literature. Anyway a little is known about their holomorphic automorphisms. It was lastly shown (see \cite{KK} and \cite{Lin}) that the group generated by shears and overshears (for the definition see e.g. \cite{Linphd}) is dense in the group of holomorphic automorphisms. It was a little surprise to us that shear are just the restriction to the fiber of diagonal and triangular conjugations.

Following the idea idea from \cite{Linphd} (we recall all details for the convenience of the reader) we shall show that the spectral ball satisfies the property obtained by Anders\'en and Lempert for holomorphic automorphisms of $\CC^n$ (see \cite{And}, \cite{And-Lem} and \cite{For-Ros}). To be more precise we shall show that the group generated by triangular and diagonal conjugations is dense in $\aut(\Omega)$ (in the local-uniform topology).

Finally, we shall show that the uniform limit of conjugations is a local conjugation in a neighborhood of $\TT$. This, together with the density of the the group generated by triangular and diagonal conjugations and the results of P.J.~Thomas and J.~Rostand (see \cite{Pas} and \cite{Ros}) imply that any automorphism of the spectral ball is a local conjugation.

\section{Diagonal and triangular conjugations}
Let us focus on conjugations of the following form:
\begin{equation}\label{diag}
 \tilde D_a x\mapsto \left(\begin{array}{cc} a(x) & 0 \\ 0 & 1/a(x) \\\end{array} \right) x \left(\begin{array}{cc} a(x) & 0 \\ 0 & 1/a(x) \\\end{array} \right)^{-1},
\end{equation}
 and

\begin{equation}\label{diag1}
T_b:x\mapsto \left(\begin{array}{cc} 1 & 0 \\ b(x) & 1 \\\end{array} \right) x \left(\begin{array}{cc} 1 & 0 \\ b(x) & 1 \\\end{array} \right)^{-1}.
\end{equation} Out aim is to describe $a$ and $b$ such that the above mappings are automorphisms of the spectral unit ball.

The case (1) is easy. First note that the simply-connectedness of $\Omega$ imply that there is $\tilde a\in \mathcal O(\Omega)$ such that $a=\exp(\tilde a/2).$ Moreover, for fixed $x_{11}$ and $x_{22}$ the mapping $(x_{12},x_{21})\mapsto (\exp(\tilde a (x)) x_{12},\exp(-\tilde a(x))x_{21})$ is injective on its domain. In particular, the mapping $$z\mapsto \exp(\tilde a(x_{11},z,t/z,x_{22}))z$$ is an automorphism of $\mathbb C_*$ for $t$ sufficiently small. Therefore $z\mapsto \tilde a(x_{11},z,t/z,x_{22})$ is constant, whence $\tilde a$ depends only on $x_{11},$ $x_{22}$ and $x_{12}x_{21}.$

\medskip

Now we focus our attention on \eqref{diag1}. We want to find $b$ such that $$x\mapsto \left(\begin{array}{cc} x_{11}-b(x)x_{12} & x_{12} \\ b(x)x_{11}+x_{21}-b^2(x)x_{12}+b(x)x_{22} & b(x)x_{12}+x_{22} \\\end{array} \right)$$ is an automorphism of $\Omega.$ Putting $(s,p):=(\tr x,\det x)\in \GG_2$ (throughout the paper $\GG_2$ denotes the symmetrized bidisc - see e.g. \cite{Edi-Zwo}) and looking at the automorphism restricted to the fibers (it is obvious that conjugations preserve fibers) we get that the mapping $$(x_{11},x_{12})\mapsto (x_{11}- x_{12} b(\left(\begin{array}{cc} x_{11} & x_{12} \\ (x_{11}(s-x_{11})-p)x_{12}^{-1}  & s-x_{11} \\\end{array} \right)),x_{12})$$ is an automorphism of $\mathbb C\times \mathbb C_*$. It is quite easy to observe that if $(x,y)\mapsto (x-f(x,y),y)$ is an automorphism of $\mathbb C\times \mathbb C_*$, iff $f(x,y)=x(1-c(y))-\gamma(y)$, $x\in \CC,$ $y\in \CC_*$, where $c\in \mathcal O(\mathbb C_*,\mathbb C_*)$ and $\gamma\in \mathcal O(\mathbb C_*,\mathbb C)$.

Applying this reasoning to $b$ we simple find that there are $c\in \OO(\CC_*\times\GG_2,\CC_*)$ and $\gamma\in \OO(\CC_*\times\GG_2)$ such that $$b(x)=x_{11}\frac{1-c(x_{12},\tr x,\det x)}{x_{12}}+\frac{\gamma(x_{12},\tr x,\det x)}{x_{12}}.$$ Putting $x_{11}=0$ we see that $\gamma(x_{12},s,p)=x_{12}\beta(x_{12},s,p),$ where $\beta\in \mathcal O(\mathbb C\times \mathbb G_2).$ Using this we see that $c$ may be extend holomorphically through $x_{12}=0$. Moreover, one may easily check that $c(x_{12},s,p)=e^{x_{12}\alpha(x_{12},\tr x,\det x)},$ where $\alpha\in \mathcal O(\mathbb C\times \mathbb G_2)$. Thus \begin{equation}\label{b} b(x)=x_{11}\frac{1-\exp(x_{12}\alpha(x_{12},\tr x,\det x))}{x_{12}}+\beta(x_{12},\tr x,\det x).\end{equation}

\begin{remark}
Note that $T_b$ is generated by $T_{\beta}$ and $T_{\gamma},$ $\gamma(x)=x_{11}\frac{1-\exp(x_{12}\alpha(x_{12},\tr x,\det x))}{x_{12}},$ where $\alpha$ and $\beta$ satisfy \eqref{b}.
\end{remark}

\section{Vector fields generated by triangular and diagonal conjugations and relations between them}\label{3}

To describe the vector fields generated by \eqref{diag} and \eqref{diag1} it is convenient to introduce the following notation:

$D_a:=\tilde D_{\exp(a/2)}.$

$T_{\alpha}':=T_b,$ where $b(x)=x_{11}\frac{1-\exp(x_{12}\alpha(x_{12},\tr x,\det x))}{x_{12}}.$

Note that the mappings $D_a,$ $T_{\beta}$ and $T'_{\alpha}$ generate the following vector fields $\frac{d}{dt}\Phi_t(\Phi_{t_0}^{-1}(x))|_{t=t_0},$ where $\{\Phi_t\}$ is one of one-parameter groups $\{ D_{ta}\},$ $\{T_{t\beta}\}$, $\{T'_{t\alpha}\}$ (such vector fiels are called sometimes infinitesimal generators):

\begin{align*}
HD_a &=ax_{12}\frac{\partial }{\partial x_{12}}-ax_{21} \frac{\partial }{\partial x_{21}},\\
HT_{\beta}&=-\beta x_{12}\frac{\partial }{\partial x_{11}}+(\beta x_{11}-\beta^2x_{12}+\beta x_{22})\frac{\partial }{\partial x_{21}}+\beta x_{12} \frac{\partial }{\partial x_{22}},\\
HT'_{\alpha}&=x_{11}x_{12}\alpha\frac{\partial }{\partial x_{11}}+(x_{22}-x_{11})x_{11}\alpha\frac{\partial }{\partial x_{21}}-x_{11}x_{22} \alpha\frac{\partial }{\partial x_{22}}.
\end{align*}

Additionally $\tau \circ T_{\beta}\circ \tau $ and $\tau\circ T'_{\alpha}\circ \tau$ (where $\tau(x)=x^t$ is a transposition) generate

\begin{align*} \tilde HT_{\beta}&=-\beta x_{21}\frac{\partial }{\partial x_{11}}+(bx_{11}-b^2x_{21}+bx_{22})\frac{\partial }{\partial x_{12}}+bx_{21} \frac{\partial }{\partial x_{22}},\\
\tilde HT'_{\alpha}&=x_{11}x_{21}\alpha\frac{\partial }{\partial x_{11}}+(x_{22}-x_{11})x_{11}\alpha\frac{\partial }{\partial x_{12}}-x_{11}x_{22} \alpha\frac{\partial }{\partial x_{22}}.
\end{align*}

Let us write the above vector fields in ''spectral'' coordinates $$(x,y,s,p)=(x_{11},x_{12},\tr x,\det x).$$ Observe that for any vector field $V$ on $\Omega$ orthogonal to $\det x$ and $\tr x$ its divergence $\div V$ (in Euclidean coordinates) is equal to $\frac{\partial v_1}{\partial x}+\frac{\partial v_2}{\partial y}-\frac{v_2}{y},$ where $V=v_1\frac{\partial}{\partial x}+v_2\frac{\partial}{\partial y}$. We get:

\begin{equation}HD_a=ay\frac{\partial}{\partial y},\ HT_{\beta}=-\beta y\frac{\partial}{\partial x}\quad \text{and}\quad HT'_{\beta}=xy\beta \frac{\partial}{\partial x}.\end{equation}

A straightforward calculation leads to:

$[HD_a ,HT_{b}]=ya(yb)'_y\frac{\partial}{\partial x}-y^2a'_xb\frac{\partial}{\partial y}$, $\div[HD_a ,HT_{b}]=0$,

$[HD_a, HT'_b]=xya(yb)'_y\frac{\partial}{\partial x}-xy^2a'_xb\frac{\partial}{\partial y},$ $\div[HD_a, HT'_b]=ay(yb)'_{y}.$

$\tilde H T'_b=x\frac{x(s-x)-p}{y}b\frac{\partial}{\partial x}+(s-2x)xb\frac{\partial}{\partial y}.$

Putting $b=1$ we see moreover that $[[HD_a,HT_1],\tilde HT'_1]=-a(p-x(s-x))\frac{\partial}{\partial x}+v\frac{\partial}{\partial y}$ for some $v$.

\section{Density of triangular and diagonal conjugations the spectral unit ball in $\mathbb C^{2\times 2}$}

\begin{proposition}\label{densityproperty} The group generated by finite compositions of the transposition, M\"obius maps, $T_b$ and $D_a$, where $a,b$ are described above is dense in the group of holomorphic automorphisms of the spectral unit ball.
\end{proposition}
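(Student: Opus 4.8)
The plan is to establish this density statement by adapting the Andersén--Lempert machinery to the spectral ball $\Omega$, following the strategy indicated in the introduction. The key principle is that if a domain admits sufficiently many complete holomorphic vector fields whose flows are global automorphisms, then the group generated by these flows is dense (in the local-uniform topology) in the identity component of the automorphism group, provided the domain enjoys a suitable density property. Concretely, I would first reduce the problem to showing that the Lie algebra generated by the infinitesimal generators listed in Section~\ref{3}, namely $HD_a$, $HT_\beta$, $HT'_\alpha$ together with their transposed counterparts $\tilde HT_\beta$, $\tilde HT'_\alpha$, is large enough to approximate arbitrary holomorphic vector fields on $\Omega$ (or at least the tangent field of any isotopy connecting a given automorphism to the identity).

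The second step is to pass to the spectral coordinates $(x,y,s,p)=(x_{11},x_{12},\tr x,\det x)$ used in Section~\ref{3}. Because conjugations preserve the fibers $\mathcal F_{(\lambda_1,\lambda_2)}$, the spectral data $(s,p)$ play the role of passive parameters, and the generating vector fields act fiberwise as shears and overshears on the Danielewski-type surface. Here I would invoke the cited results of \cite{KK}, \cite{Lin}, and \cite{Linphd}: the group generated by shears and overshears is dense in the automorphism group of the Danielewski surface. Since, as the introduction observes, these shears are precisely the restrictions of $D_a$ and $T_b$ to the fibers, the fiberwise density is essentially inherited from this known result. The computed brackets $[HD_a,HT_b]$, $[HD_a,HT'_b]$, and the triple bracket $[[HD_a,HT_1],\tilde HT'_1]=-a(p-x(s-x))\frac{\partial}{\partial x}+v\frac{\partial}{\partial y}$ are exactly the ingredients needed to show that the generated Lie algebra contains, at each fiber, all the fields required to realize arbitrary shears and overshears, with holomorphic dependence on the spectral parameters.

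The third step is to lift this fiberwise density to genuine density in $\aut(\Omega)$. The M\"obius maps $m_{\alpha,\gamma}$ act on the spectral variables $(s,p)\in\GG_2$ and allow one to move between fibers, while the transposition $\tau$ supplies the transposed fields. One then runs the standard Andersén--Lempert argument: given an automorphism $\Phi$ isotopic to the identity through automorphisms $\{\Phi_t\}$, write the time-dependent generating vector field, approximate it uniformly on compacta by elements of the generated Lie algebra (using the fiberwise approximation with holomorphic control in $(s,p)$), and then show that the corresponding composition of time-$\epsilon$ flows of the complete fields $D_a$, $T_b$, $\tau\circ T_b\circ\tau$ approximates $\Phi$. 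The main obstacle I anticipate is the uniformity of the fiberwise approximation as the spectral parameters vary, and in particular the behavior near the diagonal $\lambda_1=\lambda_2$ where the fiber degenerates (the locus meeting $\TT$) and the coordinate $y=x_{12}$ can vanish; ensuring that the approximating vector fields extend holomorphically across $y=0$ and remain complete on all of $\Omega$ — rather than merely on the generic fibers — is where the real work lies. The holomorphic extensions across $x_{12}=0$ already carried out in Section~2 for the functions $c$ and $\gamma$ suggest the mechanism by which this degeneracy is handled.
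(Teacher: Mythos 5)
Your overall framework is the right one and matches the paper's: set up an isotopy, differentiate to get a time-dependent spectrum-preserving vector field, approximate it by fields in the Lie algebra generated by the conjugation fields, and recover the automorphism by Euler's method. But the proposal has a genuine gap exactly at the step you yourself flag as ``where the real work lies.'' The paper does \emph{not} obtain the Lie-algebra approximation by invoking the fiberwise density results for Danielewski surfaces from \cite{KK}, \cite{Lin}, \cite{Linphd} and then lifting; it sidesteps the whole issue of uniformity in the spectral parameters and of the degenerate fibers by working globally with \emph{polynomial} vector fields on $\Omega$. Concretely: since $\Omega$ is balanced and pseudoconvex (hence Runge), the time-slices $X_{k/N}$ are approximated by polynomial fields $X'$ annihilating $\tr$ and $\det$; then one writes $X'=v_1\partial_{x_{11}}+v_2\partial_{x_{12}}+v_3\partial_{x_{21}}+v_4\partial_{x_{22}}$, uses the orthogonality relations $v_4=-v_1$ and $v_1(x_{22}-x_{11})=v_2x_{21}+v_3x_{12}$, expands $v_1$ in powers of $x_{12}$ and $x_{21}$ with coefficients in $\CC[x_{11},x_{22},x_{12}x_{21}]$, and successively subtracts explicit brackets --- $[[HD_a,HT_1],\tilde HT'_1]$ to kill the $\varphi$-term, $[HD_a,HT'_b]$ and $[HD_a,\tilde HT'_b]$ to normalize the divergence, $[HD_a,HT_b]$ and $[HD_a,\tilde HT_b]$ to kill $v_1$ --- until the remainder is forced, by a monomial bookkeeping argument on $w=\sum x_{12}^jx_{21}^k\zeta_{j,k}$, to be of the form $HD_a$. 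Your proposal replaces this finite, explicit computation with an appeal to an external density theorem plus an unproved parametrized version of it; as stated, that step would not go through without essentially redoing the paper's computation.

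A second, smaller omission: your isotopy argument needs an endpoint. The paper first normalizes $\varphi(0)=0$ and uses $\Phi_t=t^{-1}\varphi(t\,\cdot)$, which connects $\varphi$ to $\Phi_0=\varphi'(0)$, \emph{not} to the identity; the linear automorphism $\varphi'(0)$ is then handled separately via the Ransford--White classification ($x\mapsto axa^{-1}$ or $x\mapsto ax^ta^{-1}$) together with the fact that every invertible matrix is a finite product of triangular and diagonal matrices. Your proposal assumes the given automorphism is ``isotopic to the identity through automorphisms,'' which is not justified and skips this necessary reduction.
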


\begin{proof}
Let $\varphi$ be an automorphism of $\Omega.$ Composing it, if necessary, with the M\"obius map one may assume that $\varphi(0)=0$ (see \cite{Ran-Whi} and \cite{Edi-Zwo}). Then $\varphi'(0)$ is also an automorphism of the spectral ball. By \cite{Ran-Whi} there is an invertible matrix $a$ such that either $\varphi'(0)(x)=axa^{-1}$, $x\in \Omega$ or $\varphi'(0)(x)=ax^ta^{-1}$, $x\in \Omega.$ Losing no generality assume that the first possibility holds. Since any invertible matrix may be represented as a finite product of triangular and diagonal matrices $\varphi'(0)$ satisfies trivially the assertion.

Note that $t\mapsto \Phi_t:=t^{-1}\varphi(t\cdot)$ is a well defined mapping from $\mathbb R$ into $\aut(\Omega),$ $\Phi_0=\varphi'(0).$ Since any automorphism of $\Omega$ fixing the origin preserves the spectrum we see that $\sigma(\Phi_t(x))=\sigma(x),$ $t\in \mathbb R,$ $x\in \Omega$ ($\sigma(x)$ denotes the spectrum of a matrix $x$).

Consider the following time-dependent vector field: $$X_{t_0}(x):=\frac{d}{dt}(\Phi_t(\Phi_{t_0}^{-1}(x)))|_{t=t_0},\ x\in \Omega.$$ It is clear that $\Phi_{t_0}\circ \Phi_0^{-1}$ is obtained by integrating $X_t$ from $0$ to $t_0.$ Since $\Phi_t$ preserves the spectrum it follows that $X_{t_0}(\det x)=0$ and $X_{t_0}(\tr x)=0$ (in other words $X_{t_0}$ is othogonal to $\tr x$ and $\det x$).

We proceed as in \cite{And-Lem} and \cite{Linphd}. $X_t$ may be approximated on compact sets by integrating the time dependent vector field $X_{k/N}$ from time $k/N$ to $(k+1)/N$. It is clear that every $X_{k/N}$ may approximated by polynomial vector fields $X'$ such that $X'(\det x)=0$ and $X'(\tr x)=0$ (note that the spectral ball is a pseuduconvex balanced domain, so any holomorphic function on it may be expanded into a series of homogenous polynomials; in particular the spectral ball is a Runge domain). We shall show that $X'$ is a Lie combination of polynomial vector fields generated by diagonal and triangular conjugations and the transposition. Then the standard argument (called sometimes the \emph{Euler's method}) would imply that we could approximate $\Phi_1\circ\Phi_0^{-1}$ by compositions of the transposition and diagonal and triangular conjugations. Whence we would be able to approximate $\Phi_1$, as well.

Therefore it remains to show that $X'$ is a sum of vector fields appearing in Section~\ref{3}. Let us denote $$X'=v_1\frac{\partial}{\partial x_{11}}+v_2\frac{\partial}{\partial x_{12}}+v_3\frac{\partial}{\partial x_{21}}+ v_4\frac{\partial}{\partial x_{22}}.$$ Assumptions on $X_{t_0}$ imply that $v_4=-v_1$ and $v_1(x_{22}-x_{11})=v_2x_{21}+v_3x_{12}$ (use the orthogonaloty of $\tr x$ and $\det x$). Note that $v_1$ may be written as \begin{equation}v_1=\sum_{j=1}^nx_{12}^jf_j(x_{11},x_{22},x_{12}x_{21})+\sum_{j=1}^nx_{21}^jg_j(x_{11},x_{22},x_{12}x_{21})+ \varphi(x_{11},x_{22},x_{12}x_{21})\end{equation}
for some polynomials $f_j,g_j,\varphi\in \mathbb C[x_{11},x_{22},x_{12}x_{21}].$

It is easily seen that $\varphi(x_{11},x_{22},0)=0,$ so $\varphi(x_{11},x_{22},x_{12}x_{21})=x_{12}x_{21}\alpha(x,s,p)$ for some polynomial $\alpha.$ Adding to $X'$, if necessary, $[[HD_a,HT_1],\tilde HT'_1]$ with suitable chosen $a$ we may assume that $\varphi=0.$

Now adding vector fields of the form $[HD_a,HT'_b]$ and $[HD_a,\tilde HT'_b]$ we may assume that $\div X'=\psi(x_{12}x_{21},x_{11},x_{22})$ for some polynomial $\psi$. Again, adding to $X'$, if necessary, vector fields of the forms $[HD_a,HT_b]$ and $[HD_a,\tilde HT_b]$ we may assume that $v_1=0.$

Thus, up to adding Lie combinations of the vector fields generated by triangular and diagonal conjugations we may assume that $$X'=v_2\frac{\partial}{\partial x_{12}}+v_3\frac{\partial}{\partial x_{21}}$$ and $X'(\det x)=0$ and $\div X'=\psi(x_{12}x_{21},x_{11},x_{22}).$ The second condition means that $x_{21}v_2+x_{12}v_3=0$ so $v_2=x_{12}w,$ $v_3=-x_{21}w$ for some polynomial $w.$ In particular, $\div X'=x_{12}\frac{\partial w}{\partial x_{12}}-x_{21}\frac{\partial w}{\partial x_{21}}$. Let us write $w$ as $w=\sum x_{12}^jx_{21}^k\zeta_{j,k}(x_{11},x_{22}).$ Then it is straightforward to see that $\sum x_{12}^jx_{21}^k(j-k)\zeta_{j,k}(x_{11},x_{22})=\psi(x_{12}x_{21},x_{11},x_{22}).$ Therefore $\zeta_{j,k}=0$ whenever $j\neq k$ and $w=w(x_{12}x_{21},x_{11},x_{22})=w(x,s,p)$. In particular $X'$ is of the form $HD_a.$
\end{proof}

\section{limit of conjugations}

\begin{remark}\label{remlc} Note that the holomorphic mapping $p$ occurring in Definition~\ref{def} is defined up to a multiplication with $x\mapsto a(x)+b(x)x.$ More precisely, $p(x)xp(x)^{-1}=q(x)xq(x)^{-1}$, $x\in U$, where $p,q\in \OO (U,\MM^{-1})$ if and only if there are holomorphic functions $a,b:U\to \CC$ such that $p(x)=(a(x)+b(x)x)q(x)$ and $\det(a(x)+b(x)x)\neq0$, $x\in U.$
\end{remark}

\begin{lemma}\label{limitofconjugations}
Let $p_n\in \mathcal O(W, \mathcal M^{-1})$, $\mathcal T\subset W\subset \Omega$, $\det p_n=1$ be a sequence of holomorphic mappings such that $\varphi_n(x):=p_n(x)xp_n(x)^{-1},$ $x\in W$ is convergent locally uniformly to $\psi\in \OO (W, \Omega).$ Then there is a neighborhood $U$ of $\TT$ and diagonal mappings $a_n,b_n\in \mathcal O(U, \mathcal M)$ such that $p_n(x)(a_n(x)+xb_n(x))$ is locally uniformly convergent on $U$ to $u\in \mathcal O(U,\mathcal M^{-1})$ and $\det (a_n(x)+xb_n(x))=1.$
\end{lemma}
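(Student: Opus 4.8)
We have $p_n \in \mathcal{O}(W, \mathcal{M}^{-1})$ with $\det p_n = 1$, $\mathcal{T} \subset W \subset \Omega$. The conjugations $\varphi_n(x) = p_n(x) x p_n(x)^{-1}$ converge locally uniformly to some $\psi$. We want to find a neighborhood $U$ of $\mathcal{T}$ (non-cyclic matrices) and diagonal $a_n, b_n$ with $\det(a_n + x b_n) = 1$ such that $p_n(x)(a_n(x) + x b_n(x))$ converges locally uniformly to $u \in \mathcal{O}(U, \mathcal{M}^{-1})$.

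**Key observation from Remark \ref{remlc}**

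The conjugating matrix $p$ is only defined up to multiplication by $a(x) + b(x)x$. So the issue is that $p_n$ itself might not converge even though $\varphi_n$ does — because we can "twist" each $p_n$ by such a factor. The lemma says: we can choose the twist (here restricted to the special form where $a_n, b_n$ are diagonal) to make $p_n$ times this twist converge.

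Let me think about the structure.

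**The limit $\psi$ and its conjugacy to identity**

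Since $\varphi_n(x) = p_n(x) x p_n(x)^{-1}$, the matrix $\varphi_n(x)$ is conjugate to $x$ for each $n, x$. The limit $\psi(x)$ is then a limit of matrices conjugate to $x$. Since conjugacy classes in $\mathcal{M}_2$ are not closed in general (a diagonalizable matrix can limit to a non-diagonalizable one with the same spectrum), $\psi(x)$ need not be conjugate to $x$ in general. But on $\mathcal{T}$ (non-cyclic matrices, which in the $2\times 2$ case are the scalar matrices $\lambda I$), things are special.

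Wait — for $n=2$, non-cyclic means $\dim \ker(M - \lambda) \geq 2$ for some $\lambda$, i.e. $M = \lambda I$. So $\mathcal{T}$ is the set of scalar matrices $\lambda I$ with $|\lambda| < 1$. This is a very thin set (a copy of the disc $\mathbb{D}$). The neighborhood $U$ is a neighborhood of this curve of scalar matrices.

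**Strategy: normalize near scalar matrices**

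Near a scalar matrix $\lambda I$, the conjugation $p_n(x) x p_n(x)^{-1}$ when $x$ is close to $\lambda I$... Let me think. Write $x = \lambda I + \epsilon N$ where $N$ is small. Then $p x p^{-1} = \lambda I + \epsilon\, p N p^{-1}$. So the conjugation acts on the "deviation from scalar" $N$ by ordinary matrix conjugation.

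So $\psi(x) - (\text{scalar part})$ is the limit of $p_n \cdot (\text{deviation}) \cdot p_n^{-1}$.

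**Proof plan**

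My plan is as follows.

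\begin{enumerate}[(1)]

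\item \textbf{Reduce to behavior of $p_n$ at scalar matrices.} For $x$ near a scalar matrix $\lambda I$, write $x = \lambda I + y$ with $\tr y = \lambda_1 + \lambda_2 - 2\lambda$ small. The conjugation only sees the traceless/off-diagonal part. The key quantity is $p_n(x) \cdot (x - \tfrac{1}{2}\tr(x) I) \cdot p_n(x)^{-1}$, whose limit is controlled by $\psi$.

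\item \textbf{Use convergence of $\varphi_n$ to control the action of $p_n$ on matrix entries.} Since $\varphi_n \to \psi$, the adjoint action $\mathrm{Ad}(p_n) : M \mapsto p_n M p_n^{-1}$, applied to the matrix $x - \lambda I$, converges. I want to upgrade this to convergence of $p_n$ itself after an appropriate normalization.

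\item \textbf{Exploit the gauge freedom to normalize.} By Remark \ref{remlc}, the replacement $p_n \mapsto p_n(a_n + x b_n)$ with $a_n, b_n$ diagonal and $\det(a_n + x b_n) = 1$ does not change $\varphi_n$. The strategy is: $p_n$ may fail to converge because of a "diagonal ambiguity" — the adjoint action $\mathrm{Ad}(p_n)$ can converge while $p_n$ rotates/scales by a diagonal factor commuting appropriately. I will choose $a_n + x b_n$ precisely to cancel this ambiguity. Concretely, at a base point near $\mathcal{T}$ I can bring $p_n$ into a normalized form (say with prescribed first column, or upper-triangular with positive diagonal), and the freedom $a_n + x b_n$ of the stated form is exactly enough to achieve this because $x$ near $\lambda I$ behaves like a scalar plus a controllable correction.

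\item \textbf{Pass to the limit.} Once normalized, $\mathrm{Ad}(p_n) \to \mathrm{Ad}(\text{limit})$ together with the normalization forces $p_n(a_n + x b_n)$ itself to converge to some $u \in \mathcal{O}(U, \mathcal{M}^{-1})$, with $\det u = 1$ (since $\det p_n = 1$ and $\det(a_n + x b_n) = 1$). Holomorphicity of $u$ follows because a locally uniform limit of holomorphic $\mathcal{M}^{-1}$-valued maps staying invertible is holomorphic into $\mathcal{M}^{-1}$.

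\end{enumerate}

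**The main obstacle**

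The hard part is Step (3): identifying precisely what the "ambiguity" in $p_n$ is and showing that the restricted gauge freedom — $a_n, b_n$ \emph{diagonal} with $\det = 1$, rather than arbitrary $a + bx$ — is \emph{enough} to normalize $p_n$ into a convergent family. The subtlety is that $\mathrm{Ad}(p_n)$ converging only determines $p_n$ up to a scalar times something commuting with the relevant matrices, and one must verify that near the scalar locus $\mathcal{T}$ this residual freedom is captured exactly by maps of the form $a(x) + b(x) x$ with $a, b$ diagonal. This requires analyzing the centralizer structure: the centralizer of $x$ (for $x$ cyclic, near but off $\mathcal{T}$) consists of polynomials $a + bx$, and one must track how this degenerates as $x \to \lambda I$ and show the diagonal restriction on $a_n, b_n$ is compatible — i.e. that we do not need the full centralizer but only its diagonal part to absorb the divergent factor. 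Ensuring the normalized $p_n$ stay invertible and have the neighborhood $U$ uniform in $n$ (not shrinking as $n \to \infty$) is where the estimates must be done carefully.
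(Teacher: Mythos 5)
Your proposal correctly identifies the shape of the problem (for $n=2$ the non-cyclic matrices in $\TT$ are the scalar matrices, the gauge freedom of Remark~\ref{remlc} is what must be exploited, and the convergence of $x\mapsto p_n(x)xp_n(x)^{-1}$ is the only available input), but it stops exactly where the proof has to begin: your step (3), which you yourself flag as ``the main obstacle,'' is never carried out, and without it there is no proof. The paper fills this gap by a completely explicit chain of normalizations along the nested sets $\TT\subset\Omega'\subset\Omega''\subset\Omega$, where $\Omega'$ consists of lower-triangular matrices with equal diagonal entries and $\Omega''$ of all lower-triangular matrices. First, differentiating $q(a,x):=p_n(m_a(x))\,x\,p_n(m_a(x))^{-1}$ at $x=0$ gives $\frac{\partial q}{\partial x}(a,0)(h)=p_n(a)hp_n(a)^{-1}$, whose convergence for every $h$ (together with $\det p_n=1$) forces $p_n$ itself to converge on the scalar matrices, so one may normalize $p_n\equiv 1$ on $\TT$. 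Then on $\Omega'$ one multiplies $p\,x\,p^{-1}$ out entrywise, reads off convergence of the relevant entries of $p_n$, uses the normalization on $\TT$ to write $p_{21}=x_{21}q$ and to invert $p_{22}$ on a neighborhood uniform in $n$, and defines $a',b'$ by explicit formulas. The same pattern is repeated on $\Omega''$ (where the previously achieved identity on $\Omega'$ forces $p_{12}$ to vanish there, hence to be divisible by $x_{11}-x_{22}$) and finally on $\Omega$ (dividing $p_{12}$ by $x_{12}$), each stage being preceded by conjugation with the convergent normalizer from the previous stage so that $\varphi=\id$ on the smaller set.

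This divisibility mechanism --- after normalization on a smaller stratum, certain entries of $p_n$ acquire zeros along that stratum, so the divergent part of $p_n$ can be peeled off by an explicit factor $a+xb$ with $a,b$ diagonal --- is precisely the content your step (3) would need, and it is not obtained by a soft centralizer argument: it requires the entrywise computation of $p\,x\,p^{-1}$, the identity-principle dichotomy ruling out $p_{11}(x_{11}-x_{22})+p_{12}x_{21}=0$, and a final renormalization of the determinant by a branch of $\det(\tilde a+x\tilde b)^{1/2}$ on a simply connected neighborhood chosen uniformly in $n$. You correctly name the remaining difficulties (including the uniformity of $U$ in $n$) but supply no mechanism for resolving them, so what you have is a plausible outline rather than a proof.
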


The proof presented below is elementary and relies upon purely analytic methods. We do not know whether the lemma would follow from more general algebraic properties. Note that the main difficulty lies in the fact that numerical methods do not work for non-cyclic matrices.

\begin{proof} To simplify the notation we will omit subscript $n$. Composing $\varphi$ with M\"obius maps from both sides (more precisely taking $m_{-a}\circ \varphi\circ m_a,$ where $m_a=m_{a,1}$) we easily see that $q(a,x):=p(m_a(x))xp(m_a(x))$ is convergent locally uniformly with respect to $(a,x)$ in a neighborhood of $\mathbb D\times \mathcal T.$ Therefore $\frac{\partial q}{\partial x}(a,x)(h)$ converges locally uniformly in an open neighborhood of $\DD \times \mathcal T$ for any $h$. Putting $x=0$ we find that $$\frac{\partial q}{\partial x}(a,0)(h)=p(a)hp(a)^{-1},$$ whence $p(a)$ is convergent locally uniformly with respect to $a\in \DD$. Therefore, replacing $\varphi$ with $$x\mapsto p^{-1}(\left( \begin{array}{cc} \tr x/2& 0\\ 0& \tr x/2\end{array}\right))\varphi(x)p^{-1}(\left( \begin{array}{cc} \tr x/2& 0\\ 0& \tr x/2\end{array}\right))$$ we may assume that $p(x)=1$ for all non-cyclic matrices $x\in \Omega.$

Put $\Omega':=\{\left( \begin{array}{cc} x_{11}& 0\\ x_{21}& x_{11}\end{array}\right):\ x_{11}\in \DD,x_{21}\in \mathbb C\}$. First we show that there are diagonal mappings $a',$ $b'$ defined on a $U'$ neighborhood of $\TT$ in $\Omega'$ such that $p(x)(a'(x)+xb'(x))$ is convergent and $\det(a'(x)+xb'(x))=1$ for $x\in \Omega'$ lying in a neighborhood of $\TT$. Multiplying $p(x)xp(x)^{-1}$ out we get $$ \left( \begin{array}{cc} x_{11}+p_{12}p_{22}x_{21}& p_{22}^2x_{21}\\ p_{12}^2x_{21}& x_{11}- p_{12}p_{22}x_{21}\end{array}\right),\quad x\in \Omega'.$$ Using the fact that $p(x)xp(x)^{-1}$ converges locally uniformly on $W\cap \Omega'$, we get that $p_{21}$ and $p_{22}$ converge uniformly on compact subsets of $W\cap \Omega'.$ Since $p_{21}\equiv 0$ on $\TT$ we get that there is $q\in \OO(W\cap \Omega')$ such that $p_{21}(x)=x_{21}q(x).$ Moreover, $p_{22}\equiv 1$ on $\TT,$ so there is a neighborhood $U_0$ of $\TT$ in $\Omega'$, uniform with respect to $n$, on which $p_{22}$ does not vanish. Put $b'(x):=-q(x)/p_{22}(x),$ $a'(x):=(p_{22}(x)-b'(x)x_{11})/p_{22}(x),$ $x\in U.$ Direct calculations show that $a'$ and $b'$ satisfy the desired claim.

\medskip

Now we shall show that  there are diagonal mappings $a'',$ $b''$ on a neighborhood $U''$ of $\TT$ in $\Omega'':=\{\left( \begin{array}{cc} x_{11}& 0\\ x_{21}& x_{22}\end{array}\right):\ x_{11},x_{22}\in \DD, x_{21}\in \mathbb C\}$ such that $p(x)(a'(x)+xb'(x))$ is convergent locally uniformly and $\det(a'(x)+xb'(x))=1$ for $x\in \Omega''$ in a small neighborhood of $\mathcal T$.
Let us consider the following projection $$j_1:\left( \begin{array}{cc} x_{11}& 0\\ x_{21}& x_{22}\end{array}\right)\mapsto \left( \begin{array}{cc} (x_{11}+x_{22})/2& 0\\ x_{21}& (x_{11} + x_{22}) /2\end{array}\right)$$ and note that $V'':=j_1^{-1}(U')$ is a neighborhood of $\TT$ in $\Omega''.$

It follows from the previous step that $$u(x):=p(j_1(x))(a'(j_1(x))+b'(j_1(x))j_1(x)),\quad x\in V''$$ is convergent locally uniformly on $V''$ and $\det u=1$ there. Therefore, replacing $\varphi$ with $x\mapsto u^{-1}(x)\varphi(x) u(x)$ we may assume that $\varphi(x)=x$ for $x\in V''\cap \Omega'=U'$. Multiplying $p(x)xp(x)^{-1}$ out we get $$\left( \begin{array}{cc} x_{11}+p_{12}p_{21}(x_{11}-x_{22})+p_{12}p_{22}x_{21}& -p_{11}p_{12}(x_{11}- x_{22})-p_{12}^2x_{21}\\ p_{21}p_{22}(x_{11}-x_{22})+p_{22}^2x_{21}& x_{22} - p_{12}p_{21}(x_{11}-x_{22}) - p_{12}p_{22}x_{21} \end{array}\right),$$ $x\in \Omega''.$ Since $p(x)xp(x)^{-1}=x$ on $U'$ we deduce from the formula above that $p_{12}=0$ and $p_{22}^2=1$ and thus $p_{11}=p_{22}=1$ on $U'.$

In particular, there is a holomorphic function $q_{12}$ on $V''$ such that $p_{12}(x)=(x_{11}-x_{22})q_{12}(x)$. Similarly, $\tilde b(x):=(p_{22}(x)-p_{11}(x))(x_{11}-x_{22})^{-1},$ $x\in V'$ is a well defined holomorphic function on $V'$. Let us put $\tilde a(x):=p_{11}(x)+ q_{12}(x)x_{21}- \tilde b(x)x_{22}.$ It is quite elementary to verify that $p(x)(\tilde a(x)+\tilde b(x)x)$ is convergent in $V''$ (actually, to check it observe that $p(x)(\tilde a(x)+\tilde b(x)x)(x_{11}-x_{22})$ converges locally uniformly). Moreover, $\det(a(x)+b(x)x)=(p_{11}(x)+q_{12}(x)x_{21})(p_{22}(x)+q_{12}(x)x_{21}),$ so it is equal to $1$ when $x\in \TT$. Therefore there is a simply-connected neighborhood $U''$ of $\TT$ in $\Omega''$ (uniform with respect to subscripts $n$), $U''\subset V''$ such that $\det(\tilde a(x)+x\tilde b(x))$ does not vanish there. Let us take the branch of the square root $s(x):=\det(\tilde a(x)+x\tilde b(x))^{1/2}$ preserving $1$. Observe that $a'':=\tilde a/s,$ $b'':=\tilde b/s$ satisfy the claim.

\medskip

Now we prove the existence of $a$ and $b$ satisfying the assertion of the lemma. Put $$j_2:\left( \begin{array}{cc} x_{11}& x_{12}\\ x_{21}& x_{22}\end{array}\right)\mapsto \left( \begin{array}{cc} x_{11}& 0\\ x_{21}& x_{22}\end{array}\right).$$ Note that $V:=\Omega\cap j_2^{-1}(U'')$ is a neighborhood of $\TT$ in $\Omega$  so we may repeat the previous reasoning: since $v:=p\circ j_2\cdot (a''\circ j_2+j_2\cdot b''\circ j_2)$ is convergent on $V$ and $\det v=1$ there, replacing $\varphi$ with $v^{-1}\varphi v$ we may assume that $\varphi(x)=x$ for $x\in U''$. Comparing the coefficients of $p(x)xp(x)^{-1}$ for $x\in U''$:
$$\left( \begin{array}{cc} p_{11}p_{22}(x_{11}-x_{22})+p_{12}p_{22}x_{21} +x_{22}& -p_{11}p_{12}(x_{11}-x_{22})-p_{12}^2x_{21}\\ p_{21}p_{22}(x_{11}-x_{22}) +-p_{21}^2 x_{12}& -p_{11}p_{22}(x_{11}-x_{22})-p_{12}p_{22}x_{21} +x_{21}\end{array}\right)$$ and the ones of $x$ we get that $p_{12}(p_{11} (x_{11}-x_{22})+p_{12}x_{21})=0,$ so by the identity principle either $p_{12}=0$ or $p_{11}(x_{11}-x_{22})+p_{12}x_{21}=0$. If the second possibility would hold, then comparing the elements lying in the first column and the first row we would get that $x_{11}=p_{11}p_{22}(x_{11}-x_{22})+p_{12}p_{22}x_{21} +x_{22}=x_{22},$ a contradiction. Therefore $p_{12}=0$
Then, in particular, $p_{12}=0$ on $U''$. Thus $p_{12}(x)=x_{12}q(x)$, $x\in V,$ for some holomorphic function $q.$ Put $b:=-q$ and $a:=p_{11}-bx_{22}.$ Then $p(x)(a(x)+b(x)x)$ is convergent locally uniformly in $V.$ In particular, $x\mapsto \det(a(x)+b(x)x)$ is convergent. Moreover $\det(a(x)+b(x)x)=1$ on $\TT,$ therefore shrinking, if necessary, $V$ and dividing $a$ and $b$ by a proper non-vanishing holomorphic mapping we finish the proof.

\end{proof}

\section{Local form of the automorphisms of the spectral unit ball}
It is well known (see e.g. \cite{Edi-Zwo}) that for any $\varphi\in \aut(\Omega)$ there is a M\"obius map $m$ such that $\sigma(\varphi(x))=\sigma(m(x)).$ Composing $\varphi$ with $m^{-1}$ we may always assume that $\varphi$ preserves the spectrum, i.e. $\sigma(\varphi(x))=\sigma(x),$ $x\in \Omega.$

As a consequence of our conisderations we get the following
\begin{theorem} Let $\varphi$ be an automorphism of $\Omega$ preserving the spectrum. Then for any $x\in \Omega$ there is $u\in \mathcal O(U,\mathcal M_{2\times 2}^{-1})$ defined in an open neighborhood $U$ of $x$ such that $\varphi(x)=u(x)xu(x)^{-1}$ on $U.$

In other words, any automorhism of $\Omega$ preserving the spectrum is a local holomorphic conjugation.
\end{theorem}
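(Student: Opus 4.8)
The plan is to combine the two main pillars established earlier in the paper: the density result (Proposition~\ref{densityproperty}) and the limit-of-conjugations result (Lemma~\ref{limitofconjugations}). First I would reduce to the spectrum-preserving case, which the preamble to the theorem already grants us: composing $\varphi$ with an appropriate M\"obius map we may assume $\sigma(\varphi(x))=\sigma(x)$ for all $x\in\Omega$. By Proposition~\ref{densityproperty} there is a sequence $\varphi_n$, each a finite composition of transpositions, M\"obius maps, and triangular and diagonal conjugations, converging to $\varphi$ locally uniformly. The key observation I would exploit is that a spectrum-preserving automorphism that is a finite composition of these generators is, at least away from a thin exceptional set, itself a global holomorphic conjugation: the transposition and M\"obius factors can be absorbed (since $\varphi$ and $\varphi_n$ preserve the spectrum, any transposition must appear an even number of times or be compensated, and M\"obius factors reduce to conjugations on fibers), leaving each $\varphi_n$ in the form $x\mapsto p_n(x)x p_n(x)^{-1}$ with $p_n\in\OO(W,\MM^{-1})$ on a suitable domain $W\supset\TT$, normalized so that $\det p_n=1$.

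With the $\varphi_n=p_n(\cdot)\,\cdot\,p_n(\cdot)^{-1}$ in hand and converging locally uniformly to $\varphi$, I would invoke Lemma~\ref{limitofconjugations} directly. That lemma produces a neighborhood $U$ of $\TT$ and diagonal correction terms $a_n,b_n$ with $\det(a_n+xb_n)=1$ such that the modified conjugators $u_n(x):=p_n(x)(a_n(x)+xb_n(x))$ converge locally uniformly on $U$ to some $u\in\OO(U,\MM^{-1})$. By Remark~\ref{remlc}, replacing $p_n$ with $u_n$ does not change the conjugation: $u_n(x)xu_n(x)^{-1}=p_n(x)xp_n(x)^{-1}=\varphi_n(x)$. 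Passing to the limit, $\varphi(x)=u(x)xu(x)^{-1}$ on $U$, so $\varphi$ is a genuine (global, on $U$) holomorphic conjugation in a neighborhood of the non-cyclic locus $\TT$. This settles the theorem for every $x\in\TT$ at once.

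It remains to handle points $x_0\in\Omega\setminus\TT$, i.e.\ cyclic matrices. Here the situation is easier because near a cyclic matrix the spectral structure is well-behaved: by the results of Thomas--Rostand (\cite{Pas} and \cite{Ros}) cited in the introduction, a spectrum-preserving automorphism is locally a conjugation near any cyclic matrix, since one can construct the conjugator $u$ explicitly from the eigenvector/cyclic-vector data, which varies holomorphically in a neighborhood of $x_0$. Concretely, near a cyclic $x_0$ both $x$ and $\varphi(x)$ have the same characteristic polynomial, and one builds $u(x)$ as the change of basis intertwining the companion-form (or Jordan-type) normalizations of $x$ and $\varphi(x)$; holomorphic dependence follows from cyclicity. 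Combining the two regimes gives a local holomorphic conjugation at every $x\in\Omega$.

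The main obstacle I anticipate is the first reduction, namely verifying cleanly that each density-approximant $\varphi_n$ can be written as a single global conjugation $p_n(\cdot)\,\cdot\,p_n(\cdot)^{-1}$ on a common neighborhood $W$ of $\TT$ with the normalization $\det p_n=1$, so that Lemma~\ref{limitofconjugations} applies with a \emph{uniform} (in $n$) domain. A finite composition of conjugations is again a conjugation via the product of the conjugators, but the transposition $\tau$ is genuinely not a conjugation, and one must argue — using spectrum preservation and the structure of $\aut(\Omega)$ from \cite{Ran-Whi} — that the transposition factors organize so that the composition still collapses to a conjugation near $\TT$ (or, failing that, pass to the subsequence/subgroup for which it does, invoking the transposition-free part of the generating set). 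Getting the domains $W$ and $U$ independent of $n$, as Lemma~\ref{limitofconjugations} requires, is the delicate point, but the lemma is stated precisely to deliver exactly this uniformity.
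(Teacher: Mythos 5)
Your proposal is correct and follows essentially the same route as the paper: Proposition~\ref{densityproperty} produces conjugations $p_n(x)xp_n(x)^{-1}$ converging to $\varphi$, Lemma~\ref{limitofconjugations} upgrades this to a genuine holomorphic conjugation on a neighborhood of $\TT$, and the result of Thomas \cite{Pas} handles the cyclic locus $\Omega\setminus\TT$. The issue you flag about absorbing the transposition and M\"obius factors into a single conjugator $p_n$ with a uniform domain is a real subtlety that the paper's own (very terse) proof also passes over without comment.
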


\begin{proof} It follows from Proposition~\ref{densityproperty} that there is a sequence $(p_n)\subset \OO(\Omega,\MM^{-1})$ such that $p_n(x)xp_n(x)^{-1}$ converges locally uniformly to $\varphi(x).$ It follows from Lemma~\ref{limitofconjugations} that there is a neighborhood $U$ of $\TT$ and $u\in \OO(U,\MM^{-1})$ such that $\varphi(x)=u(x)xu(x)^{-1},$ $x\in U.$

On the other hand it is well known (see \cite{Pas}) that $\varphi$ is a local conjugation on $\Omega\setminus\TT.$
\end{proof}

\begin{remark}
It is very natural to ask whether a local conjugation on a domain $U$ satisfying "nice" topological properties (for example $H^1(U,\OO)=H^2(U,\ZZ)=0$) a local holomorphic conjugation is a holomorphic conjugation. Note, that this is equivalent to finding a solution of the following problem which may be viewed as a counterpart of the meromorphic Cousin problem:

\emph{given a covering $\{\Omega_{\alpha}\}$ and $a_{\alpha\beta},b_{\alpha\beta}\in \OO(\Omega_{\alpha}\cap U_{\beta})$, $\det(a_{\alpha\beta}(x)+xb_{\alpha\beta}(x)) \neq 0,$ $x\in \Omega_{\alpha}\cap\Omega_{\beta}$, such that  $$(a_{\alpha\beta}(x)+xb_{\alpha\beta}(x))(a_{\beta\gamma} (x)+xb_{\beta\gamma}(x))= a_{\alpha\gamma}(x)+xb_{\alpha\gamma}(x),\quad x\in \Omega_{\alpha}\cap\Omega_{\beta}\cap \Omega_{\gamma}$$ and $$(a_{\alpha\beta}(x)+xb_{\alpha\beta}(x))(a_{\beta\alpha}(x)+xb_{\beta\alpha }(x))=1,\quad x\in \Omega_{\alpha}\cap\Omega_{\beta}$$ find $a_{\alpha},b_{\alpha}\in \OO (\Omega_{\alpha})$ such that $a_{\alpha\beta}(x)+xb_{\alpha\beta}(x)= (a_{\alpha}(x)+xb_{\alpha}(x)) (a_{\beta}(x)+xb_{\beta}(x))^{-1}$ on $\Omega_{\alpha}\cap \Omega_{\beta}$ and $\det(a_{\alpha}(x)+xb_{\alpha}(x))$ does not vanish on $\Omega_{\alpha}.$}

Actually, assume that the problem stated above has a solution. A local conjugation $\varphi$ gives a data for the above problem in the following way: We may locally expand $\varphi$ a a holomorphic conjugation, i.e. there are $u_{\alpha}$ and $U_{\alpha}$ such that $\varphi(x)=u_{\alpha}(x)xu_{\alpha}(x)^{-1},$ where $u_{\alpha}\in \OO (U_{\alpha},\MM^{-1})$ and $\{U_{\alpha}\}$ is an open covering of $\Omega_{\alpha}$. Then, it follows from Remark~\ref{remlc} that $u_{\alpha,\beta}:=u_{\alpha}^{-1}u_{\beta}$ are data for the problem stated above. Solving it we find that $u_{\alpha}(x)v_{\beta}(x)^{-1}= (a_{\alpha}(x)+xb_{\alpha}(x))(a_{\beta}(x)+xb_{\beta}(x))^{-1}$ on $\Omega_{\alpha}\cap\Omega_{\beta}.$ Putting $w(x):=u_{\alpha}(x)(a_{\alpha}(x)+xb_{\alpha}(x))$, $x\in \Omega_{\alpha}$ we get a well defined holomorphic mapping on $U$ such that $\varphi(x)=w(x) x w(x)^{-1}.$

On the other hand, suppose that $\{\Omega_{\alpha\beta},a_{\alpha\beta},b_{\alpha,\beta}\}$ are data for the above problem. Then, solving the second Cousin problem for matrices we get $u_{\alpha}\in \OO(\Omega_{\alpha},\MM^{-1})$ such that $a_{\alpha\beta}(x)+xb_{\alpha\beta}(x)= u_{\alpha}(x) u_{\beta}(x)$ on $\Omega_{\alpha}\cap \Omega_{\beta}.$ Putting $\varphi(x):=u_{\alpha}(x)xu_{\alpha}(x)^{-1}$ we get a local holomorphic conjugation on $U.$ If it were a holomorphic conjugation, we would get $u\in \OO(U,\MM^{-1})$ such that $\varphi(x)=u(x)xu(x)^{-1}.$ Making use of Remark~\ref{remlc} again we get $a_{\alpha},b_{\beta}$ such that $u_{\alpha}(x)=(a_{\alpha}(x) + xb_{\alpha}(x))u(x),$ $x\in \Omega_{\alpha}.$ Then it is a direct to observe that $a_{\alpha},b_{\alpha}$ solve the above problem.

\end{remark}


\begin{thebibliography}{999999999}
\bibitem[And]{And} \textsc{E.~Anders\'en}, \textit{Volume-preserving automorphisms of $\CC^n$}, Complex Variables Theory Appl. 14
(1990), no. 1-4, 223--235.
\bibitem[And-Lem]{And-Lem} \textsc{E.~Anders\'en and L.~Lempert}, \textit{On the group of holomorphic automorphisms of $\CC^n$}, Invent. Math.
110 (1992), no. 2, 371--388.
\bibitem[Edi-Zwo]{Edi-Zwo} \textsc{A.~Edigarian and W.~Zwonek}, \textit{Geometry of the symmetrized polydisc}, Arch. Math. (Basel) 84 (2005), no. 4, 364--374.
\bibitem[For-Ros]{For-Ros} \textsc{F.~Forstneri\u{c}, J.~P.~Rosay}, \textit{Approximations of biholomorphic mappings by automorphisms of
$\CC^n$,} Invent. Math. 112 (1993), no. 2, 323--349.
\bibitem[Kal-Kut]{KK} \textsc{S.~Kaliman and F.~Kutzschebauch}, \textit{Density property for hypersurfaces $UV=P(\bar X)$}, Math. Z. 258 (2008), no.1, 115--131.
\bibitem[Kos]{Kos}\textsc{Kosi\'nski}, \textit{The group of automorphisms of the spectral ball}, to appear in Proc. AMS.
\bibitem[Kut-Lin]{Lin} \textsc{F.~Kutzschebauch and A.~Lind}, \textit{Holomorphic automorphisms of Danielewski surfaces I, density of the group of overshears}, Proc. AMS, 139 (2011), 11, 3915--3927.
\bibitem[Lin]{Linphd} \textsc{A.~Lind}, \textit{Holomorphic automorphims of Danielewski surfaces}, Phd-thesis, 2009.
\bibitem[Ran-Whi]{Ran-Whi} \textsc{T.J.~Ransford and M.C.~White}, {\it Holomorphic self-maps of the spectral unit ball,} Bull. London Math. Soc. 23 (1991), 256--262.
\bibitem[Ros]{Ros} {\sc J.~Rostand}, {\it On the automorphisms of the spectral unit ball}, Studia Math. 155 (2003), 207--230
\bibitem[Tho]{Pas} {\sc P.~J.~Thomas}, {\it A local form for the automorphisms of the spectral unit ball}, Collect. Math. 59 (2008), no. 3, 321--324.


\end{thebibliography}
\end{document}